\newtheorem{T}{Theorem} 
\newtheorem{Le}{Lemma} 
\newtheorem{C}{Corollary}
\theoremstyle{definition}
\newtheorem{E}{Example}
\theoremstyle{remark}
\newtheorem{R}{Remark}
\begin{document}

\date{\today}

\title{Circle Decompositions of Surfaces} 

\author{G\'abor Moussong and N\'andor Sim\'anyi}

\thanks{The first author was supported by the Hungarian Nat.\ Sci.\ Found.\ 
  (OTKA) grant No.\ T047102, the second author was supported by the National Science
  Foundation, grant DMS-0800538.}

\address[G\'abor Moussong]{Inst.\ of Mathematics\\ E\"otv\"os
  University\\ P\'azm\'any P\'eter S\'et\'any 1/C\\
  Budapest 1117, Hungary}
\email{mg@math.elte.hu}
\address[N\'andor Sim\'anyi]{The University of Alabama at 
  Birmingham\\ Department of Mathematics\\
  1300 University Blvd., Suite 452\\
  Birmingham, AL 35294 U.S.A.}
\email{simanyi@math.uab.edu}

\subjclass{57N05, 54F99}
\keywords{Topological surfaces, circle partitions, Jordan-Sch\"onflies Theorem, 
upper semicontinuous decompositions, circle foliations.} 

\begin{abstract} 
We determine which connected surfaces can be partitioned into
topological circles.  There are exactly seven such surfaces up to
homeomorphism: those of finite type, of Euler characteristic zero, and
with compact boundary components.  As a byproduct, we get that any
circle decomposition of a surface is upper semicontinuous.
\end{abstract}

\maketitle 

\parskip=0pt plus 2.5pt

\section{Introduction}
\label{sec_intr}

In what follows by a surface we shall mean a second countable, Hausdorff, connected, two-dimensional, topological manifold, possibly with boundary. By a circle in a surface $S$ we shall mean a closed Jordan curve, i.e., any subset of $S$ homeomorphic to the standard unit circle. A circle decomposition of $S$ is a partition of $S$ into circles.

Our goal is to show that circle decompositions only exist for a very limited range of surfaces. The main result in this note is Corollary~\ref{allowable} below stating that any surface with a circle decomposition is homeomorphic to either a torus, a Klein bottle, an annulus, a M\"obius band, an open annulus, a half-open annulus, or an open M\"obius band. For short, these seven topological types will be called allowable surfaces. 

One may observe that these allowable surfaces are precisely those of finite type (i.e., with finitely generated homology), with zero Euler characteristic, and with all boundary components homeomorphic to a circle.

It is clear by straightforward geometric constructions that all allowable surfaces admit circle decompositions. Moreover, such constructions can be carried out in the smooth category resulting in smooth foliations with circles. It is well known that any surface foliated by circles has zero Euler characteristic, therefore, circle foliations can only exist for allowable surfaces. However, as the following example shows, a circle decomposition need not be a topological foliation. The authors are indebted to Lex Oversteegen for pointing out the existence of such examples.

\begin{E}\label{non-foliation} Construct first of all a smooth, centrally symmetric partition $\mathcal J$ of the square $Q=[-1,1]\times [-1,1]$ into Jordan arcs $J$ such that

\begin{enumerate}
  \item $\{ -1\}\times [-1,1]\in\mathcal J$ and $\{ 1\}\times [-1,1]\in\mathcal J$,
  \item every $J\in\mathcal J$, other than the two curves listed in (1), has only its endpoints on the boundary $\partial Q$, one on the bottom, one on the top side of $Q$, and
  \item the element of $\mathcal J$ that contains the origin is
$$J^{\ast} =\left\{ (x,y)\in Q:-\frac{1}{2}\le x\le\frac{1}{2},\; y=12x^3-x\,\right\}.$$
\end{enumerate}

Next we horizontally shrink by a factor of $1/2$ the square $Q$ along with its smooth partition $\mathcal J$, and insert the arising block in the left half $[-1,0]\times [-1,1]$ of $Q$. Then we horizontally shrink the square $Q$, along with its smooth partition $\mathcal J$, by a factor of $1/4$, and insert the arising block in the rectangle $[0,1/2]\times [-1,1]$. After this we again horizontally shrink the square $Q$, along with its smooth partition $\mathcal J$, by a factor of $1/8$, and insert the arising block in the rectangle $[1/2,3/4]\times [-1,1]$, etc.

Finally, we include the right-hand edge of $Q$ to complete a partition $\mathcal P$ of the square $Q$ into Jordan arcs. It is clear that the partition elements of $\mathcal P$ can be parametrized as
$$\mathcal P=\{J_t:0\le t\le 1\}$$
in accordance with the horizontal linear order among the curves $J_t\in\mathcal P$. This parametrization is continuous with respect to the Hausdorff metric on the set of compacta in $Q$. Thus, the projection map, which takes all elements of $J_t$ to $t$, is an open quotient map from $Q$ to $[0,1]$.

Yet the partition fails to be topologically equivalent to the canonical partition of $Q$ into vertical line segments. Indeed, the partition $\mathcal P$ does not even possess any transversal curve emanating from the point $(1,y)\in Q$ if $-1/9<y<1/9$ ($\pm 1/9$ being the local maximum and minimum of $y$-values along $J^{\ast}$).\hfill\qed
\end{E}

Squares partitioned as in this example can clearly be involved in circle decompositions of surfaces. So, circle decompositions in general are not foliations. There is however a weaker property of decompositions, namely, upper semicontinuity (see Section~\ref{sec_usc}) which, as we show in Corollary~\ref{usc} and the subsequent remark, is shared by all circle decompositions of surfaces. By the classical Jordan--Sch\"onflies theorem, circles in surfaces have strong local separaton properties, which forces circle decompositions to be upper semicontinuous. Our main theorem will follow relatively easily from this fact in Section~\ref{sec_proof}.

It should be noted that many circle decompositions of $3$-manifolds exist (for instance, Euclidean $3$-space can be foliated by circles, see \cite{V}) which in general are not upper semicontinuous. 
    
\section{Preliminary lemmas}

\begin{Le}\label{nodisk}
  A closed disk admits no circle decomposition.
\end{Le}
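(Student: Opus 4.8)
The plan is to argue by contradiction: assuming the closed disk $D$ carries a circle decomposition $\mathcal D$, I will extract from the Jordan--Sch\"onflies separation property a partial order on $\mathcal D$ that has no minimal element, and then contradict this using compactness. To set up the order, embed $D$ in the sphere $S^2$ and fix a base point $p\in S^2\setminus D$. Every circle $C\in\mathcal D$ is a Jordan curve not containing $p$, so by Jordan--Sch\"onflies it separates $S^2$ into two open disks; let $V(C)$ be the one not containing $p$ and set $B(C)=C\cup V(C)$, the closed disk it bounds on the $p$-free side. The first block of the argument is to check the basic geometry of these disks: that $B(C)\subseteq D$ with $V(C)\subseteq\operatorname{int}D$ (using that $S^2\setminus D$ is connected and contains $p$, so no point strictly inside $C$ can reach $p$ without crossing $C\subseteq D$), and, crucially, that $V(C)$ is \emph{saturated}, i.e.\ a union of members of $\mathcal D$. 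Saturation is immediate: any circle $C'\neq C$ is disjoint from $C$ and connected, hence lies entirely in $V(C)$ or entirely outside $B(C)$.

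Next I would prove the key step: the decomposition has no innermost circle. Since $V(C)$ is a nonempty open set, it contains some point, hence the whole circle $C'\in\mathcal D$ through that point; by saturation $C'\subseteq V(C)$, and a second application of Sch\"onflies inside the disk $V(C)$ shows $B(C')\subseteq V(C)=\operatorname{int}B(C)$, so $B(C')\subsetneq B(C)$. Declaring $C'\preceq C$ whenever $B(C')\subseteq B(C)$ thus gives a partial order on $\mathcal D$ in which every element strictly dominates another.

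Finally, I would derive the contradiction from compactness. By the Hausdorff maximality principle choose a maximal totally ordered chain $\mathcal M\subseteq\mathcal D$. The disks $\{B(C):C\in\mathcal M\}$ are nested and compact, so by the finite intersection property $K=\bigcap_{C\in\mathcal M}B(C)\neq\varnothing$. Picking $x\in K$ and letting $C^{\ast}\in\mathcal D$ be the circle through $x$, one checks that $x$ cannot lie on any $C\in\mathcal M$ (otherwise that $C$ would be the minimum of $\mathcal M$, which by the previous paragraph could be extended strictly below it, contradicting maximality); hence $x\in V(C)$ and therefore $C^{\ast}\subseteq V(C)$, giving $B(C^{\ast})\subsetneq B(C)$ for every $C\in\mathcal M$. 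Thus $C^{\ast}$ lies strictly below the entire chain and is not one of its members, so $\mathcal M\cup\{C^{\ast}\}$ is a strictly larger chain, contradicting maximality. I expect the main obstacle to be the first paragraph rather than the order-theoretic combinatorics: one must use Jordan--Sch\"onflies carefully and consistently --- via the single exterior point $p$ --- to orient all the circles simultaneously and to guarantee that the nesting takes place inside $D$; once the partial order and the saturation property are in place, the maximality/descent argument is routine.
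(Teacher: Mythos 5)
Your proposal is correct and follows essentially the same route as the paper: define the nested "inside" disks via Jordan--Sch\"onflies, observe that every circle's interior contains another circle so no minimal element can exist, and contradict this with compactness plus a maximality principle (the paper invokes Zorn's lemma where you use Hausdorff maximality, and defines the interior of a circle as the component of its complement missing $\partial S$ rather than via a base point in $S^2$; these are cosmetic differences). Your first and third paragraphs simply spell out the details the paper leaves implicit, namely saturation of the interiors and the verification that a chain has a lower bound in $\mathcal C$.
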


\begin{proof} Suppose that $\mathcal C$ is a circle decomposition of the closed disk $S$. Any member $C\in\mathcal C$ has a well-defined interior; namely, the connected component of $S-C$ which does not contain $\partial S$. For $C_1,C_2\in\mathcal C$ call $C_1<C_2$ if $C_1$ is contained in the interior of $C_2$. One readily checks that $<$ is a partial order relation on the set $\mathcal C$. Compactness of $S$ implies that any ordered chain in $\mathcal C$ has a lower bound. Then by Zorn's lemma there exists at least one minimal element in $\mathcal C$. But no minimal elements can exist since the interior of any circle must contain further circles.
\end{proof}

\begin{C}\label{circle_bounds_no_disk}
  If $\mathcal C$ is a circle decomposition of the surface $S$, then no element of $\mathcal C$ bounds a disk in $S$.\hfill\qed
\end{C}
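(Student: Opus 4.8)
The plan is to reduce the statement directly to Lemma~\ref{nodisk} by extracting a circle decomposition of a closed disk. Suppose, for contradiction, that some element $C\in\mathcal C$ bounds a disk in $S$; that is, there is an embedded closed $2$-disk $D\subseteq S$ with $\partial D=C$. I would then consider the subfamily
$$\mathcal C_D=\{\,C'\in\mathcal C:C'\subseteq D\,\},$$
which manifestly consists of disjoint circles, and argue that it is in fact a circle decomposition of $D$.

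The only point needing care is that $\mathcal C_D$ covers all of $D$. First, $C\in\mathcal C_D$ takes care of every point of $\partial D$. For an interior point $p\in\operatorname{int}D$, let $C'\in\mathcal C$ be the unique member containing $p$. Since $\mathcal C$ is a partition and $p\notin C=\partial D$, the circle $C'$ is disjoint from $C$, so it lies in $S\setminus C$. Here I would use that $\operatorname{int}D$ is an open, connected subset of $S$ whose frontier is exactly $C$; hence $\operatorname{int}D$ is a single connected component of $S\setminus C$. Being connected and meeting $\operatorname{int}D$, the circle $C'$ must be entirely contained in $\operatorname{int}D\subseteq D$, so $C'\in\mathcal C_D$. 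Thus $\mathcal C_D$ partitions $D$ into circles.

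This produces a circle decomposition of the closed disk $D$, contradicting Lemma~\ref{nodisk}, and the corollary follows. I do not anticipate any genuine obstacle: the argument is elementary once the interpretation of ``bounds a disk'' is fixed (an embedded closed disk $D$ with $\partial D=C$), the real content being the simple connectedness observation that an interior circle, being disjoint from the boundary circle $C$, cannot escape the component $\operatorname{int}D$ of $S\setminus C$ and therefore stays inside $D$.
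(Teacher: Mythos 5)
Your argument is correct and is precisely the reduction the paper intends: the corollary is stated with a \qed because the authors regard it as immediate from Lemma~\ref{nodisk}, the point being exactly your observation that the circles meeting the open disk bounded by $C$ cannot escape it, so $\mathcal C$ restricts to a circle decomposition of the closed disk. You have simply written out the routine details the paper omits.
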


It follows for instance that neither an open disk nor a two-sphere admit circle decompositions. 

\begin{Le}\label{circle_in_annulus}
Let $C_0$ and $C_1$ be the two boundary circles of an annulus $A$. If $D\subseteq A$ is a circle with $D\ne C_0$, then there exists at least one connected component $U$ of $A-D$ with $U\cap C_1=\emptyset$. If, further, $D\cap C_0\ne\emptyset$, then any circle in such a component $U$ bounds a disk in $A$.
\end{Le}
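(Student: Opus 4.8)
The plan is to push everything onto the $2$-sphere and let Jordan--Schönflies do the work. First I would cap the annulus off: attach a closed disk $E_0$ along $C_0$ and a closed disk $E_1$ along $C_1$, producing a sphere $\Sigma=E_0\cup A\cup E_1$ with $E_i\cap A=C_i$ and $E_0\cap E_1=\emptyset$. Since $D\subseteq A$ is disjoint from the open disks $\operatorname{int}E_0$ and $\operatorname{int}E_1$, and each of these is connected, Jordan--Schönflies lets me write $\Sigma-D=W\sqcup W'$ as two open disks with common frontier $D$, where $\operatorname{int}E_1$ is contained in exactly one of them, say $\operatorname{int}E_1\subseteq W'$. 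The single fact I would extract and reuse is a separation criterion: a circle $D'\subseteq A$ bounds a disk in $A$ if and only if $\operatorname{int}E_0$ and $\operatorname{int}E_1$ lie in the same component of $\Sigma-D'$. This is immediate from Schönflies applied to $D'$, since if the caps lie on the same side then the other complementary disk of $D'$ meets neither cap, hence lies in $A$ and is bounded by $D'$; while if the caps are separated each complementary disk contains a cap, so none lies in $A$.

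For the first assertion I would show that $W$ (the side avoiding $\operatorname{int}E_1$) already supplies the required component. Because $C_1\subseteq E_1\subseteq\overline{W'}$ and $\overline{W'}\cap W=\emptyset$, the set $W$ misses $C_1$; and $W\cap A\ne\emptyset$, the only way this could fail being $W=\operatorname{int}E_0$, which forces the excluded equality $D=C_0$. Any component $U$ of $A-D$ lying in $W$ then satisfies $U\cap C_1=\emptyset$.

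For the second assertion, fix a circle $D'\subseteq U$ in any component $U$ with $U\cap C_1=\emptyset$, and suppose toward a contradiction that $D'$ does not bound a disk in $A$; by the criterion $D'$ separates the caps, say $\operatorname{int}E_0\subseteq V_0$ and $\operatorname{int}E_1\subseteq V_1$ where $\Sigma-D'=V_0\sqcup V_1$. Here is where the hypothesis $D\cap C_0\ne\emptyset$ does its work: choosing $p\in C_0\cap D$, I note $p\in\overline{\operatorname{int}E_0}\subseteq\overline{V_0}$ but $p\notin D'$ (as $D'$ sits in the open disk of $\Sigma-D$ containing $U$, whose frontier is $D$), so $p\in V_0$; since $D$ is connected and disjoint from $D'$, this pins all of $D$ into $V_0$. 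Consequently $V_1$ meets neither $D$ nor $\operatorname{int}E_0$, so $V_1\cap A=V_1\setminus\operatorname{int}E_1$ is an annular collar $R\subseteq A-D$ whose outer frontier is $D'\subseteq U$ and whose inner frontier is $C_1$. Openness of $U$ in $A$ forces $R\subseteq U$, and then $C_1\subseteq U$, contradicting $U\cap C_1=\emptyset$.

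The genuinely delicate point is this last collar-trapping step, and it is exactly where $D\cap C_0\ne\emptyset$ is indispensable: without it, $U$ could be an honest subannulus caught between $C_0$ and $D$, which carries essential circles bounding no disk. I expect the main care to go into verifying, purely topologically and without any transversality or finiteness of $C_0\cap D$, that the collar $R$ between $D'$ and $C_1$ really lies in the single component $U$ and drags all of $C_1$ into $U$; the remainder is bookkeeping with the two applications of Jordan--Schönflies.
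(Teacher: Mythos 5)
Your proof is correct. It runs on the same topological engine as the paper's argument --- Jordan--Sch\"onflies separation by an essential circle in the annulus --- but you execute it in the mirror direction and with considerably more point-set detail. The paper's proof of the second assertion is four lines: if $C\subseteq U$ is not nullhomotopic in $A$, then $C$ and $C_1$ bound a subannulus containing $D$, so $D$ and $C_1$ lie on the same side of $C$, whence $D$ cannot meet $C_0$; this contradicts the hypothesis $D\cap C_0\ne\emptyset$. You instead use $D\cap C_0\ne\emptyset$ to pin $D$ (together with the cap over $C_0$) on one side of the essential circle $D'$, and conclude that the other side is a collar of $C_1$ that openness forces into $U$, contradicting $U\cap C_1=\emptyset$; these are contrapositive forms of the same dichotomy. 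What your version buys is explicitness: the capping to the sphere, the criterion that $D'$ bounds a disk in $A$ exactly when both caps lie on the same side of $D'$, and the collar-trapping step (the one genuinely delicate point, which you correctly identify) make visible the facts the paper compresses into ``both $D$ and $C_1$ lie on the same side of $C$'' --- namely that components of $A-D$ are open, that $D'$ lies in the closure of the collar, and that an open disk minus a closed subdisk is connected. Your handling of the first assertion is likewise sound, and arguably more careful than the paper's one-sentence appeal to the connectedness of $C_1$, which is slightly delicate when $C_1$ meets $D$. The only items you leave implicit --- that a disk in $A$ bounded by $D'$ must be the closure of one of the two complementary components of $\Sigma-D'$, and that some component of $A-D$ actually lies in $W$ --- are at the level of detail the paper itself takes for granted.
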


\begin{proof}
The set $A-D$ is disconnected unless $D=C_1$. Therefore, the connected set $C_1$ cannot intersect all connected components of $A-D$. This implies the first statement. Let $U$ be a connected component of $A-D$ with $U\cap C_1=\emptyset$. Suppose now that $C\subseteq U$ is a circle which is not nullhomotopic in $A$. Then $C$ and $C_1$ bound an annulus which contains $D$. Therefore, both $D$ and $C_1$ lie on the same side of $C$ in $A$, which implies that $D$ cannot intersect $C_0$. This proves the last statement.
\end{proof}

\begin{Le}\label{annulus_nice}
Suppose that $\mathcal C$ is a circle decomposition of an annulus $A$. Then both boundary circles of $A$ belong to $\mathcal C$. If $C\in\mathcal C$ is contained in the interior of $A$, then $C$ cuts $A$ into two annuli (both of which inherit circle decompositions from $\mathcal C$).
\end{Le}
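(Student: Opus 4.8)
The plan is to treat the two assertions separately, relying throughout on a single structural observation: if $D\in\mathcal C$ and $C'\in\mathcal C$ is any other member, then $C'\cap D=\emptyset$, so the connected set $C'$ lies in exactly one component of $A-D$. Consequently every component of $A-D$ is a union of members of $\mathcal C$, and in particular inherits a partition into circles. The two earlier facts I would combine with this are Corollary~\ref{circle_bounds_no_disk} (no member of a circle decomposition bounds a disk) and Lemma~\ref{circle_in_annulus}.

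For the boundary statement, fix a point $p\in C_0$ and let $D\in\mathcal C$ be the member containing it; I want to show $D=C_0$. Suppose not. Then $D\ne C_0$ while $D\cap C_0\ne\emptyset$ (and also $D\ne C_1$, since $D$ meets $C_0$, which is disjoint from $C_1$). Lemma~\ref{circle_in_annulus} then supplies a component $U$ of $A-D$ with $U\cap C_1=\emptyset$, and its second clause guarantees that every circle lying in $U$ bounds a disk in $A$. But $U$ is nonempty and open, so it contains a point, hence an entire member $C'\in\mathcal C$ (by the structural observation, $C'\subseteq U$). This $C'$ would be a circle in $U$ bounding a disk, contradicting Corollary~\ref{circle_bounds_no_disk}. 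Hence $D=C_0$, so $C_0\in\mathcal C$; the same argument with the roles of $C_0$ and $C_1$ exchanged gives $C_1\in\mathcal C$.

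For the interior statement, let $C\in\mathcal C$ lie in the interior of $A$, so $C$ is disjoint from both boundary circles and, by Corollary~\ref{circle_bounds_no_disk}, does not bound a disk in $A$. I would realize $A$ as a standard annulus in the sphere, $A=S^2\setminus(O_0\cup O_1)$ with $O_0,O_1$ disjoint open disks bounded by $C_0,C_1$, and apply the Jordan--Sch\"onflies theorem to the Jordan curve $C\subseteq S^2$: it bounds two closed disks $B,B'$ with $B\cup B'=S^2$ and $B\cap B'=C$. Since $O_0$ and $O_1$ are connected and disjoint from $C$, each lies in the interior of $B$ or of $B'$; they cannot both lie in the same one, for then the other disk would be contained in $A$ and would exhibit $C$ as bounding a disk in $A$. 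Thus, after relabeling, $B\setminus O_0$ and $B'\setminus O_1$ are closed annuli (a closed disk with an interior open disk removed) with boundary circles $\{C,C_0\}$ and $\{C,C_1\}$ respectively, meeting exactly along $C$; these are the two annuli into which $C$ cuts $A$.

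Finally, the inherited decompositions follow from the structural observation once more: each of the two annuli is $C$ together with one component of $A-C$, and that component is a union of members of $\mathcal C$, while its other boundary circle $C_0$ (resp.\ $C_1$) belongs to $\mathcal C$ by the first part; so the members of $\mathcal C$ contained in each closed annulus partition it into circles. The step I expect to require the most care is the interior separation: passing from ``$C$ does not bound a disk'' to ``$C$ cuts $A$ into exactly two annuli'' is precisely where the merely topological (non-smooth) nature of the circle forces me to invoke Jordan--Sch\"onflies rather than any differential-topological normal-form argument, and I must check carefully that the two complementary disks in $S^2$ distribute the holes $O_0,O_1$ one to each side.
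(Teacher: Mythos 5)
Your proof is correct and follows essentially the same route as the paper: the boundary statement is obtained exactly as in the paper by combining Lemma~\ref{circle_in_annulus} (applied to the member $D$ meeting a boundary circle) with Corollary~\ref{circle_bounds_no_disk}. For the interior statement the paper simply notes that members of $\mathcal C$ are homotopically nontrivial and cites the standard fact that such a circle cuts the annulus into two annuli; your Jordan--Sch\"onflies argument in $S^2$ is just that standard fact written out in full, so there is no substantive difference.
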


\begin{proof}
Assume that $C\in\mathcal C$ is different from both boundary circles. If $C$ is not contained entirely in the interior of $A$, then Lemma~\ref{circle_in_annulus} applied to $D=C$ implies that some elements of $\mathcal C$ bound disks, contradicting Corollary~\ref{circle_bounds_no_disk}. This proves the first statement of the lemma. By Corollary~\ref{circle_bounds_no_disk} only homotopically nontrivial circles can belong to~$\mathcal C$, which implies the last statement.
\end{proof}

The following lemma reduces the main question to the case when the surface is orientable and has no boundary. Recall from the introduction that we call a surface allowable if it is homeomorphic to one from the following list: torus, Klein bottle, annulus, M\"obius band, open annulus, half-open annulus, open M\"obius band.

\begin{Le}\label{reduction}
Suppose that the surface $S$ is not allowable, and that $S$ admits a circle decomposition. Then there exists an orientable surface $\widetilde{S}$ with  $\partial\widetilde{S}=\emptyset$ which is not allowable and admits a circle decomposition.
\end{Le}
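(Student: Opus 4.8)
The plan is to dispose of the two obstructions separately: first pass to the interior of $S$ to kill the boundary, then, if orientability still fails, pass to the orientation double cover. Throughout I would monitor non-allowability through homotopy invariants. Since, as recalled in the introduction, a surface is allowable exactly when it has finite type, zero Euler characteristic, and all boundary components circles, a \emph{boundaryless} surface is allowable precisely when it has finite type and zero Euler characteristic; equivalently, it is non-allowable precisely when it has infinite type or nonzero Euler characteristic. Both constructions below are easy to control with respect to these invariants, so the genuine work lies in a preliminary claim about the boundary.

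The key claim is that $\partial S$ is \emph{saturated}: every $C\in\mathcal C$ meeting $\partial S$ is contained in $\partial S$. Granting this, each boundary component $B$ is a union of elements; but an element inside $\partial S$ is a circle lying in the single component $B$, and a circle is both open and closed in the $1$-manifold $B$, so connectedness forces $B$ to equal that element. In particular every boundary component is a circle belonging to $\mathcal C$, and no component can be a copy of $\mathbb R$, since $\mathbb R$ contains no circle. To prove the claim I would argue by contradiction: if some $C\in\mathcal C$ meets $\partial S$ without being contained in it, then $C\cap\partial S$ is a proper nonempty closed subset of the circle $C$, so there is a point $q\in C\cap\partial S$ that is a limit of points of $C$ in the interior of $S$. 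Working in a half-disk chart at $q$ and using the Jordan--Sch\"onflies theorem to describe the complementary regions of $C$, one produces a region bounded by an arc of $C$ and an arc of $\partial S$ into which a neighboring element of $\mathcal C$ is confined; that element would then bound a disk in $S$, contradicting Corollary~\ref{circle_bounds_no_disk}. Making this trapping rigorous is, I expect, the main obstacle, since the candidate bounding region is cut off by $C$ and $\partial S$ only locally, so some care is needed (choosing an innermost tangency, or applying Lemma~\ref{circle_in_annulus} inside a collar of $B$) to confine the neighboring element globally.

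Once the claim holds, set $S_0=\operatorname{int}S$. Saturation makes $\mathcal C_0=\{C\in\mathcal C: C\subseteq S_0\}$ a partition of $S_0$ into circles, i.e.\ a circle decomposition of the boundaryless surface $S_0$. The inclusion $S_0\hookrightarrow S$ is a homotopy equivalence, so $S_0$ and $S$ have the same homology, hence the same (finite or infinite) type and the same Euler characteristic. Since all boundary components of $S$ are now known to be circles, the non-allowability of $S$ must come from infinite type or from $\chi(S)\neq0$; this property passes to $S_0$, which is therefore non-allowable, boundaryless, and orientable exactly when $S$ is.

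If $S_0$ is orientable we are done with $\widetilde S=S_0$. Otherwise let $\pi\colon\widetilde S\to S_0$ be the orientation double cover, which is orientable and still boundaryless. For each $C\in\mathcal C_0$ the preimage $\pi^{-1}(C)$ is a compact $1$-manifold double-covering the circle $C$, hence a disjoint union of one or two circles, and the totality of these components is a circle decomposition $\widetilde{\mathcal C}$ of $\widetilde S$. Finally a finite cover preserves type, and $\chi(\widetilde S)=2\chi(S_0)$, so $\widetilde S$ again has infinite type or nonzero Euler characteristic and is thus non-allowable. This yields the required orientable, boundaryless, non-allowable surface carrying a circle decomposition.
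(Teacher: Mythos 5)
Your reduction to the boundaryless case goes through deleting $\partial S$, and everything therefore hangs on your ``key claim'' that $\partial S$ is saturated. That claim is true, but it is not a preliminary: it is essentially the hard content of the paper. For compact boundary components it is exactly case~(1) of Theorem~\ref{main}, whose proof occupies the technical heart of the paper (the Sierpi\'nski continuum argument), and for boundary components homeomorphic to $\mathbb R$ it is a statement the paper explicitly declines to prove directly, deriving it only at the very end from Corollary~\ref{allowable} --- which itself depends on Lemma~\ref{reduction}, so you cannot appeal to it here without circularity. Your proposed local argument for the claim does not close this gap: in a half-disk chart at $q$, an arc of $C$ together with an arc of $\partial S$ cuts off a region only \emph{relative to the chart}, so a neighboring element of $\mathcal C$ entering that region through $\partial S$ can simply leave through the far side of the chart and wander off into the rest of the surface; nothing confines it, and hence nothing forces it to bound a disk. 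Confining such an element genuinely requires a global separation argument --- this is precisely why Theorem~\ref{main} has to work inside a whole collar annulus and rule out, via Sierpi\'nski's theorem, the possibility that \emph{every} nearby element escapes the collar. You flag this step yourself as ``the main obstacle,'' and it is: as written, the proof has a gap exactly there, and in the case of a line boundary component even Theorem~\ref{main} would not rescue it.

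The paper's proof avoids this entirely by going in the opposite direction: instead of deleting the boundary (which requires knowing how $\mathcal C$ meets it), it \emph{caps off} each boundary component --- gluing a half-open annulus with a decomposed interior to each circle component and a punctured closed half-plane to each line component --- so that the enlarged surface $S'$ automatically inherits a circle decomposition with no analysis of $\mathcal C$ near $\partial S$ whatsoever; non-allowability of $S'$ is then checked by a short fundamental-group computation. If you want to keep your interior-deletion strategy, you would have to first prove Theorem~\ref{main} and separately rule out line boundary components, at which point the lemma is doing far more work than it needs to. The remainder of your argument (the double cover, the bookkeeping of finite type and Euler characteristic) is fine and matches the paper's.
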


\begin{proof}
First we eliminate boundary by attaching parts with circle decompositions to each boundary component of $S$. Let $U$ denote a half-open annulus with a circle decomposition of its interior, and glue a copy of $U$ along $\partial U$ to each compact component of $\partial S$. Let $V$ be a closed half-plane with an interior point removed, and equip the interior of $V$ (an open annulus) with a circle decomposition. Glue a copy of $V$ along its boundary to each noncompact component of $\partial S$. The surface $S'$ obtained through all these gluings has $\partial S'=\emptyset$, and inherits a circle decomposition from $S$ and from the attached parts.

Next, if $S'$ is orientable, put $\widetilde{S}=S'$, if not, then define $\widetilde{S}$ as the orientable double covering of $S'$. The circle decomposition of $S'$ clearly lifts to that of $\widetilde{S}$. 

It remains to be shown that $\widetilde{S}$ is not allowable. By inspection of the list of allowable surfaces it is clear that a surface is allowable if and only if any double covering of it is allowable. Therefore, it suffices to check that $S'$ is not allowable. 

We know that $S$ is not allowable. If $S$ is not of finite type, then neither is $S'$. So suppose that $S$ has finite type. If all boundary components of $S$ are compact, then $S'$ is homotopy equivalent to $S$, so it is not allowable. Suppose now that $S$ has at least one boundary component homeomorphic to the real line. Then $\pi_1(S')$ is the free product of $\pi_1(S)$ with at least one copy of $\mathbf Z$, therefore, it can only be isomorphic to the fundamental group of one of the allowable surfaces if $S$ is simply connected. But then $S$ cannot have a circle decomposition by Corollary~\ref{circle_bounds_no_disk}. So $S'$ is not allowable in this case either.
\end{proof}

\section{Upper semicontinuity}
\label{sec_usc}

Suppose that a Hausdorff topological space $X$ is decomposed to a family $\mathcal F$ of pairwise disjoint compact sets. Recall that $\mathcal F$ is an upper semicontinuous decomposition if for every $F\in\mathcal F$ and for every neighborhood $U$ of $F$ there exists a smaller neighborhood $V$ of $F$ such that $G\subseteq U$ whenever $G\in\mathcal F$ and $G\cap V\ne\emptyset$.

\medskip

We shall prove that all circle decompositions of surfaces are upper semicontinuous, see Corollary~\ref{usc} below.  First we use embedded annuli to characterize upper semicontinuous circle decompositions of orientable surfaces with empty boundary. Let $S$ be a surface (not necessarily orientable), and $C$ be a circle in $S$. Assume that either

\begin{enumerate}
 \item $C$ is contained in the interior of $S$, and is two-sided in $S$, or 
 \item $C$ is a component of $\partial S$.
\end{enumerate}

\noindent By an annular neighborhood of $C$ we mean any annulus embedded in $S$ for which

\begin{enumerate}
 \item $C$ is the image of the middle circle of the annulus under the embedding in the first case, or
 \item $C$ is one of the two boundary circles in the second case, respectively.
\end{enumerate}

\noindent It follows from the classical Jordan--Sch\"onflies theorems that annular neighborhoods exist for $C$, and, consequently, they form a basis of neighborhoods for $C$ in $S$.

\medskip

Suppose now that a circle decomposition $\mathcal C$ is given on $S$. An embedded annulus in $S$ will be called a $\mathcal C$-annulus if both boundary circles belong to $\mathcal C$. Accordingly, annular neighborhoods of circles will be called $\mathcal C$-annular neighborhoods if they are $\mathcal C$-annuli.

\begin{Le}\label{annulus_usc}
  Any circle decomposition of an annulus is upper semicontinuous.
\end{Le}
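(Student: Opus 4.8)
The plan is to argue by contradiction, using that $A$ is a compact metrizable surface so that upper semicontinuity can be tested on sequences, and then to exploit the fact that the elements of $\mathcal C$ are pairwise disjoint \emph{essential} circles, which are therefore ``parallel'' and mutually separating. First I would record the structural input from the earlier results. By Corollary~\ref{circle_bounds_no_disk} no element of $\mathcal C$ bounds a disk, so every $C\in\mathcal C$ is homotopically nontrivial; by Lemma~\ref{annulus_nice} the two boundary circles $C_0,C_1$ lie in $\mathcal C$ and every interior element cuts $A$ into two annuli. Consequently, any two distinct elements $C,C'\in\mathcal C$ are disjoint essential circles, so $A\setminus(C\cup C')$ falls into linearly arranged regions with $C$ and $C'$ as separating walls. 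For a fixed $C\in\mathcal C$ and any other $G\in\mathcal C$ I write $A^-_C,A^+_C$ for the two components of $A\setminus C$ (containing $C_0$, resp.\ $C_1$), so that the connected set $G$ lies entirely in one of them.

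Suppose upper semicontinuity fails at some $C\in\mathcal C$. In sequential form this gives an open set $U\supseteq C$ and elements $G_n\in\mathcal C$ together with points $x_n\in G_n$ with $d(x_n,C)\to 0$ and points $y_n\in G_n\setminus U$. Passing to subsequences and using compactness of $A$, we get $x_n\to x_*\in C$ and $y_n\to y_*\in A\setminus U$; let $G_*\in\mathcal C$ be the element containing $y_*$. Since $C\subseteq U$ we have $y_*\notin C$, hence $G_*\neq C$. The heart of the argument is then to produce a single element $D\in\mathcal C$ separating $C$ from $G_*$, i.e.\ with $C\subseteq A^-_D$ and $G_*\subseteq A^+_D$. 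Granting this, $x_*$ and $y_*$ lie in the two \emph{open} components of $A\setminus D$, so $x_n$ and $y_n$ lie in different components for all large $n$; but $x_n,y_n\in G_n$, which is connected and, whenever $G_n\neq D$, disjoint from $D$, so $G_n$ cannot meet both components --- a contradiction. Finally $G_n=D$ can occur only finitely often, since otherwise $x_*\in\overline D=D$ would meet $C$.

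The main obstacle, and the only genuinely geometric step, is the production of the separating element $D$; this is exactly where the annulus structure of Lemma~\ref{annulus_nice} is used. The two disjoint essential circles $C$ and $G_*$ determine a middle region $R$, namely the component of $A\setminus(C\cup G_*)$ incident to both of them, which is a nonempty open annulus. I would choose any point $p\in R$ and let $D\in\mathcal C$ be the element through $p$. Being connected and disjoint from the walls $C$ and $G_*$, the circle $D$ stays inside $R$; since $R$ has $C$ and $G_*$ as its two walls, $C$ and $G_*$ then lie on opposite sides of $D$, as required. The cases in which $C$ or $G_*$ is a boundary circle are handled in the same way: one of the flanking regions degenerates, but the middle region between the relevant pair of essential circles persists, and one need only take $p$ off $\partial A$ so that the resulting $D$ is an interior element (hence $D\neq C_0,C_1$) and $A\setminus D$ genuinely has two components.
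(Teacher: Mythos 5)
Your proof is correct, and it takes a genuinely different route from the paper's. The paper's argument is order-theoretic: it linearly orders $\mathcal C$ by separation from $C_0$, notes via Lemma~\ref{annulus_nice} that the order is dense, deduces that each $C\in\mathcal C$ is the intersection of its $\mathcal C$-annular neighborhoods, and then uses compactness (the finite-intersection property of these nested annuli) to conclude that \emph{every} neighborhood of $C$ contains a $\mathcal C$-annular neighborhood, from which upper semicontinuity is immediate. You instead verify the definition directly by a sequential compactness argument, which is legitimate since $A$ is a compact metrizable space and the decomposition elements are compact. The geometric kernel is the same in both: between any two disjoint elements of $\mathcal C$ the middle region is a nonempty open annulus, hence contains a third element of $\mathcal C$, which (being essential by Corollary~\ref{circle_bounds_no_disk}) separates the two --- this is exactly the paper's density step, for which you supply the explicit argument the paper only gestures at. What the paper's packaging buys is a stronger structural conclusion (a neighborhood basis of $\mathcal C$-annuli around each circle), which is reused in Lemma~\ref{usc=ann_nbhds} and underlies the remark that the quotient space is $[0,1]$; your version proves exactly the stated lemma and nothing more, but is self-contained and arguably more elementary. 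One small expository point: your definition of $A^-_C$, $A^+_C$ as the components of $A\setminus C$ containing $C_0$, resp.\ $C_1$, is vacuous when $C$ is itself a boundary circle; this is harmless because you only ever apply the two-sided splitting to the interior circle $D$, but the sentence as written should be restricted to interior elements.
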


\begin{proof}
Let $C_0$ and $C_1$ denote the boundary circles of $A$. By Lemma~\ref{annulus_nice} both belong to $\mathcal C$. For any two further $C,C'\in\mathcal C$ write $C<C'$ if $C$ separates $C_0$ from $C'$ (or, equivalently, if $C'$ separates $C$ from $C_1$). Extend relation $<$ to $C_0$ and $C_1$ by making them smallest and largest, respectively. Then Lemma~\ref{annulus_nice} implies that the set $\mathcal C$ is linearly ordered by relation $<\,$, and that this ordering is dense. 

It follows now that any $C\in\mathcal C$ equals the intersection of its $\mathcal C$-annular neighborhoods. Indeed, if $x$ is an arbitrary point of $A$ not in $C$, then $x\in C'$ with some $C'\in\mathcal C$ for which we may assume $C<C'$. Pick $C''\in\mathcal C$ with $C<C''<C'$, then $C_0$ and $C''$ bound a $\mathcal C$-annular neighborhood of $C$ not containing $x$.

The family of $\mathcal C$-annular neighborhoods of $C$ is closed under finite intersections. Therefore, by compactness, any neighborhood of $C$ contains a $\mathcal C$-annular neighborhood. Now upper semicontinuity follows immediately.
\end{proof}

\begin{R}
It is well known that upper semicontinuity is equivalent to the Hausdorff property of the quotient space.  It is easy to see that under the assumptions of Lemma~\ref{annulus_usc} the quotient space is homeomorphic to $[0,1]$.
\end{R}

\begin{Le}\label{usc=ann_nbhds}
Let $S$ be an orientable surface without boundary, and let $\mathcal C$ be a circle decomposition of $S$. Then the following two conditions are equivalent:
\begin{enumerate}
 \item $\mathcal C$ is upper semicontinuous.
 \item Every $C\in\mathcal C$ admits a $\mathcal C$-annular neighborhood.
\end{enumerate}
\end{Le}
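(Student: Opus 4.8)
The plan is to prove the two implications separately. Throughout I shall use the hypotheses on $S$: since $S$ is orientable and $\partial S=\emptyset$, every $C\in\mathcal C$ lies in the interior of $S$ and is two-sided, so by the Jordan--Sch\"onflies theorem it falls under case (1) of the setup above and has a basis of annular neighborhoods. The implication $(2)\Rightarrow(1)$ will be a quick reduction to Lemma~\ref{annulus_usc}, while the substance of the lemma lies in $(1)\Rightarrow(2)$, where I must manufacture the boundary circles of a $\mathcal C$-annulus out of nearby decomposition elements.

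For $(2)\Rightarrow(1)$, fix $C\in\mathcal C$ and a $\mathcal C$-annular neighborhood $A$ of $C$ (so $C$ is the middle circle and both circles of $\partial A$ lie in $\mathcal C$). Because $\partial A\subseteq\mathcal C$, every other element of $\mathcal C$ is disjoint from $\partial A$ and hence lies either entirely inside or entirely outside $A$; thus the members of $\mathcal C$ contained in $A$ form a circle decomposition $\mathcal C_A$ of $A$, which is upper semicontinuous by Lemma~\ref{annulus_usc}. Given any neighborhood $U$ of $C$, I apply upper semicontinuity of $\mathcal C_A$ to $C$ and the neighborhood $U\cap\operatorname{int}A$ to obtain a neighborhood $V\subseteq\operatorname{int}A$ of $C$, open in $S$, such that every member of $\mathcal C_A$ meeting $V$ lies in $U$. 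Any $G\in\mathcal C$ meeting $V$ then meets $\operatorname{int}A$ while avoiding $\partial A$, so $G\in\mathcal C_A$ and therefore $G\subseteq U$. As $C$ was arbitrary, $\mathcal C$ is upper semicontinuous.

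For $(1)\Rightarrow(2)$, fix $C\in\mathcal C$ and an annular neighborhood $A_0$ of $C$, with coordinates $A_0\cong S^1\times[-1,1]$ and $C=S^1\times\{0\}$. Applying upper semicontinuity of $\mathcal C$ at $C$ to the neighborhood $\operatorname{int}A_0$ produces a neighborhood $V$ of $C$ such that every element of $\mathcal C$ meeting $V$ is contained in $\operatorname{int}A_0$. Choose a point $x\in V$ strictly on the positive side of $C$ and let $D^+\in\mathcal C$ be the element through $x$. Then $D^+\subseteq\operatorname{int}A_0$ and $D^+\cap C=\emptyset$, so by connectedness $D^+$ lies in the open annulus $S^1\times(0,1)$.

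The crucial point is that $D^+$ co-bounds an annulus with $C$. In the open annulus $S^1\times(0,1)$ an embedded circle either bounds a disk or separates the two ends; by Corollary~\ref{circle_bounds_no_disk} the element $D^+$ bounds no disk in $S$, hence none in $S^1\times(0,1)$, so it separates the end near $C$ from the outer end, and the closed region between $C$ and $D^+$ is an annulus $B^+$ with $\partial B^+=C\cup D^+$. The symmetric argument on the negative side yields $D^-\in\mathcal C$ and an annulus $B^-$ with $\partial B^-=C\cup D^-$. Gluing $B^+$ and $B^-$ along their common boundary circle $C$ gives an annulus $B\subseteq A_0$ whose boundary circles $D^{\pm}$ lie in $\mathcal C$ and whose middle circle is $C$; since $\operatorname{int}B$ is open in $S$ and contains $C$, this $B$ is the required $\mathcal C$-annular neighborhood. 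I expect this last construction to be the main obstacle: the two ingredients that make it work are upper semicontinuity, which confines $D^{\pm}$ to the fixed collar $A_0$ and to one side of $C$, and Corollary~\ref{circle_bounds_no_disk} together with the classification of circles in an annulus, which upgrades ``contained in the collar'' to ``parallel to $C$.''
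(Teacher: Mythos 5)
Your proof is correct and follows essentially the same route as the paper's: for $(1)\Rightarrow(2)$ the paper likewise uses upper semicontinuity to trap decomposition elements inside a fixed annular neighborhood, picks one on each side of $C$, and invokes Corollary~\ref{circle_bounds_no_disk} (via Lemma~\ref{annulus_nice}) to see that they cobound a $\mathcal C$-annulus containing $C$; for $(2)\Rightarrow(1)$ the paper simply cites Lemma~\ref{annulus_usc}, of which your argument is a careful spelling-out.
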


\begin{proof}
(1)$\Rightarrow$(2): If $\mathcal C$ is assumed upper semicontinuous and $C\in\mathcal C$ is given, choose a neighborhood $V$ of $C$ such that all members of $\mathcal C$ that meet $V$ stay in the interior of a fixed annular neighborhood $A$ of $C$. Pick two such members $C_1$ and $C_2$ on either side of $C$. By Lemma~\ref{annulus_nice} $C_1$ and $C_2$ bound an annulus within $A$, which therefore is a $\mathcal C$-annular neighborhood of $C$.

\noindent (2)$\Rightarrow$(1): Immediate consequence of Lemma~\ref{annulus_usc}.
\end{proof}

\begin{T}\label{main}
Let $\mathcal C$ be a circle decomposition of the surface $S$, and let $C_0$ be a circle in $S$. Assume that either

\begin{enumerate}
 \item $C_0$ is a component of $\partial S$, or 
 \item $C_0\in\mathcal C$ and $C_0$ is a two-sided circle in the interior of $S$. 
\end{enumerate}

\noindent Then $C_0$ has a $\mathcal C$-annular neighborhood. In particular, $C_0$ belongs to $\mathcal C$ in both cases.
\end{T}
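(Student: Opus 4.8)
The plan is to reduce the entire statement to the existence of a single well-placed element of $\mathcal C$ near $C_0$, and then to produce that element by a contradiction argument powered by Corollary~\ref{circle_bounds_no_disk}. First I would fix an annular neighborhood $A$ of $C_0$ (these exist by Jordan--Sch\"onflies). In both cases this supplies a one-sided collar $A^+\cong C_0\times[0,1]$ in which $C_0=C_0\times\{0\}$ is \emph{uncrossable} by members of $\mathcal C$: in case~(1) because $C_0\subseteq\partial S$, and in case~(2) because $C_0\in\mathcal C$, so every other element is disjoint from it. The key preliminary observation is that any element of $\mathcal C$ lying in the open collar is essential: by Corollary~\ref{circle_bounds_no_disk} it bounds no disk, so inside the annulus $A^+$ it is core-parallel and hence cobounds a sub-annulus with $C_0$.

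Granting for a moment that such a contained element $E\in\mathcal C$ exists on each available side, the remainder is clean. The sub-annulus $B$ between $E$ and $C_0$ is $\mathcal C$-saturated: an element meeting its interior can cross neither $E\in\mathcal C$ nor the uncrossable $C_0$, so it is trapped in $B$. Hence $\mathcal C$ restricts to a circle decomposition of the annulus $B$, and Lemma~\ref{annulus_nice} forces both boundary circles of $B$ into $\mathcal C$; in particular $C_0\in\mathcal C$, which is precisely the case-(1) conclusion. Gluing the single sub-annulus (case~1), or the two sub-annuli along $C_0$ (case~2), then yields the desired $\mathcal C$-annular neighborhood. So everything comes down to the \emph{existence} of an element of $\mathcal C$ contained in the collar.

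To produce it I would argue by contradiction, assuming that every element meeting the open collar $C_0\times(0,1)$ escapes, necessarily by crossing the far boundary $T=C_0\times\{1\}$, since it cannot cross $C_0$. For each $n$ the element $D_n\in\mathcal C$ through the point $(\theta_0,1/n)$ then escapes; passing to a subsequence I may assume the $D_n$ are distinct (the alternative, a single element dipping into the collar arbitrarily deeply, is treated separately using uncrossability). Following the component of $D_n\cap A^+$ through $(\theta_0,1/n)$ produces an excursion arc $\alpha_n$ with both endpoints on $T$ that reaches collar-depth at most $1/n$. Closing $\alpha_n$ by a sub-arc of $T$ gives a Jordan curve bounding a disk $E_n\subseteq A^+$ (Sch\"onflies), and since the $\alpha_n$ lie in the distinct elements $D_n$ they are pairwise disjoint and are crossed by no other element. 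The aim is to use these ever-deeper disjoint excursions, together with the separation properties of the circles $D_n$, to confine some element of $\mathcal C$ inside one of the disks $E_n$, contradicting Corollary~\ref{circle_bounds_no_disk}.

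\textbf{The main obstacle} is exactly this final confinement. Locally near $C_0$ the arcs $\alpha_n$ form an innocuous comb accumulating on $C_0$ --- the very behavior displayed by the arc decomposition of Example~\ref{non-foliation}, which is perfectly consistent and even upper semicontinuous --- so no contradiction can be squeezed from the local picture alone. The contradiction must instead be extracted globally from the fact that each $D_n$ is a \emph{closed} curve, which by Jordan--Sch\"onflies separates the surface; it is here that the hypothesis ``surface'' rather than ``$3$-manifold'' is indispensable, matching the remark in the introduction that circle decompositions of $3$-manifolds need not be upper semicontinuous. Converting this separation into a genuinely trapped, disk-bounding element --- that is, ruling out that the escaping excursions can forever slip back out through $T$ --- is the step I expect to demand the most care, presumably through a minimality or Zorn-type argument in the spirit of Lemma~\ref{nodisk}.
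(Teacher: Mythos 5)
Your reduction is exactly the paper's: fix an annular neighborhood $A$ (collar) of $C_0$, observe that it suffices to find a single element of $\mathcal C$ other than $C_0$ contained in $A$, and then finish with Corollary~\ref{circle_bounds_no_disk}, Lemma~\ref{circle_in_annulus} and Lemma~\ref{annulus_nice} (which also delivers $C_0\in\mathcal C$). That part is correct. But the entire difficulty of the theorem is the existence of such an element, and there your proposal stops short: you set up the excursion arcs $\alpha_n$ and the disks $E_n$ they cut off together with a sub-arc of the far boundary $T$, and then state that the aim is to ``confine some element of $\mathcal C$ inside one of the disks $E_n$'' --- while candidly admitting you do not see how. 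This is a genuine gap, not a routine detail. Worse, the specific route you sketch is unpromising: a disk $E_n$ whose boundary contains an arc of $T$ is not $\mathcal C$-saturated (elements are free to exit through that arc), and, as you yourself note via Example~\ref{non-foliation}, an infinite comb of ever-deeper excursions accumulating on $C_0$ is locally perfectly consistent, so no minimality or Zorn-type argument in the style of Lemma~\ref{nodisk} on the individual excursion disks will produce a contradiction.

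The paper's actual mechanism is quite different and is worth comparing. Assuming no element of $\mathcal C$ is contained in $A$, every excursion arc $E$ (closure of a component of $D\cap U^t$) cuts off an open disk $V_E$ on the side away from $C_0$; let $K_E$ be the closure of the complement of $V_E$ in the half-open collar. The nesting of the $V_E$'s gives the family $\{K_E\}$ the property that all finite intersections are connected, so $K=\bigcap_E K_E$ is a continuum containing $C_0$. Second countability makes the traces $E\cap K$ a countable family of pairwise disjoint nonempty closed sets, so $K$ would be a countable disjoint union of at least two nonempty closed sets unless $K=C_0$; Sierpi\'nski's theorem (\cite{S}) forbids the former, whence $K=C_0$ and $C_0\in\mathcal C$. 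The contradiction is then extracted not from trapping a circle in a disk but from covering the essential parallel circle $C^{1/3}$ by finitely many nested disks $V_E$, hence by a single one --- impossible since a core circle of the annulus cannot lie in a disk. Your instinct that the contradiction must be ``global'' and must use the separation properties of closed curves is right, but the tool that realizes it is continuum theory (Sierpi\'nski) applied to the residual set $K$, and nothing in your sketch substitutes for it.
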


\begin{proof}
It will suffice to prove the theorem in case (1). Indeed, the other case follows if we cut $S$ along $C_0$, find $\mathcal C$-annular neighborhoods on both sides and reglue them.

Fix an arbitrary annular neighborhood $A$ for $C_0$ in $S$. Then $C_0$ is one of the boundary circles of the annulus $A$. We shall prove that there exists a circle $D\in\mathcal C$ different from $C_0$, and contained in $A$. If such a $D$ is found, then Lemma~\ref{circle_in_annulus} and Corollary~\ref{circle_bounds_no_disk} imply that $C_0\cap D=\emptyset$, and that $D$ is not nullhomotopic in $A$. Then $C_0$ and $D$ bound an annulus which inherits a circle decomposition from $\mathcal C$. Lemma~\ref{annulus_nice} applied to this annulus yields $C_0\in\mathcal C$. So, $C_0$ and $D$ bound a $\mathcal C$-annular neighborhood for $C_0$ in $S$.

By way of contradiction, for the rest of the proof we assume that no circle $D\in\mathcal C$ exists with $D\ne C_0$ and $D\subseteq A$. Now we introduce some further notation. For concreteness, let us fix a homeomorphism $h:C_0\times [0,1]\to A$ with $h(x,0)=x$ for $x\in C_0$. For any parameter $t\in (0,1]$ define the following sets:

\begin{displaymath}
  \begin{aligned}
    C^t= & h(C_0\times\{t\}), \\
    A^t= & h(C_0\times [0,t]), \\
    U^t= & A^t-C^t=h(C_0\times [0,t)), \\
    \mathcal D^t= &\{D\in\mathcal C: D\ne C_0\;\text{and}\;D\cap U^t\ne\emptyset\}.
  \end{aligned}
\end{displaymath}

\noindent Then $A^t$ is an annulus bounded by $C^0=C_0$ and $C^t$; in particular, $A^1=A$. (We shall only use these sets for $t=1$, $t=1/2$, and $t=1/3$, that is, for the annulus $A$ and for its half and third.)

By our assumption for any $D\in\mathcal D^t$ the set $D\cap U^t$ is a disjoint union of open arcs in $D$. Let the closures of all such arcs (with fixed $t$ and variable $D$) form the set $\mathcal E^t$. 

Any element $E$ of $\mathcal E^t$ is a Jordan arc in $A^t$ connecting two distinct points of $C^t$. One side of this arc in the half-open annulus $U^t$ is an open disk $V_E$. Let $K_E$ denote the closure of $U^t-V_E$ in $A^t$, then $K_E$ is compact and connected.

Two distinct elements $E_1,E_2\in\mathcal E^t$ cannot intersect one another in $U^t$. This implies that $V_{E_1}$ and $V_{E_2}$ are either disjoint, or one is contained in the other. Moreover, if $E_1\ne E_2 $ and $V_{E_1}\subseteq V_{E_2}$, then $E_1\cap U^t\subseteq V_{E_2}$. Therefore, for any finite number of elements $E_1$, $\ldots$, $E_k\in\mathcal E^t$ the set $K_{E_1}\cap\ldots\cap K_{E_k}$ is still connected. If in a family of continua all finite subfamilies have connected intersection, then the intersection of the whole family is a continuum. This implies that the set $K^t=\bigcap\{K_E:E\in\mathcal E^t\}$ is connected. Our goal is to show that $K^t=C_0$. Clearly $K^{t_1}\subseteq K^{t_2}$ whenever $t_1\le t_2$. The relation $K^t=C_0$ is actually true for all $t$, but for our purposes it will suffice to prove this for one particular value $t<1$. For concreteness, let us select $t=1/2$ and put $K=K^{1/2}$.

We claim now that $K=C_0$. To this end consider first the family $\mathcal F=\{E\cap K: E\in\mathcal E^1, E\cap K\ne\emptyset\}$ of compact subsets of the circles in the decomposition $\mathcal C$. Since no two distinct arcs in $\mathcal E^1$ can intersect in $U^1$, all elements of $\mathcal F$ are pairwise disjoint. Each member $F$ of $\mathcal F$ is contained in a unique arc $E(F)\in\mathcal E^1$. The correspondence $F\mapsto E(F)$ is clearly injective.

Consider the sets of the form $V_{E(F)}$ for $F\in\mathcal F$; these are all nonempty open sets in $S$. We claim that they are pairwise disjoint. Indeed, if $V_{E(F_1)}$ and $V_{E(F_2)}$ intersect, then one is a subset of the other, say, $V_{E(F_1)}\subseteq V_{E(F_2)}$. Now if $F_1\ne F_2$, then $E(F_1)\ne E(F_2)$, and by our previous arguments $E(F_1)\cap U^1\subseteq V_{E(F_2)}$. But this is impossible since $E(F_1)\cap K\ne\emptyset$ while $V_{E(F_2)}$ is disjoint from $K^1$ and $K^1\supseteq K$.

It follows that $\mathcal F$ is countable. Now if $C_0\in\mathcal C$, then $K=C_0\cup\bigcup\mathcal F$, and if $C_0\notin\mathcal C$, then $K=\bigcup\mathcal F$. In both cases the continuum $K$ is decomposed into a countable family of pairwise disjoint closed subsets. By a theorem of Sierpi\'nski (\cite{S}) this is only possible if the family consists of a single set. Clearly $K\notin\mathcal F$ since $C_0\subseteq K$. Therefore, only the case $C_0\in\mathcal C$ and $K=C_0$ is possible, and our claim is proved. 

\smallskip
Finally, consider the parallel circle $C^{1/3}$ of the annulus $A$. Since it is disjoint from $K$, the set $C^{1/3}$ is covered by the family of open disks $V_E$ for $E\in\mathcal E^{1/2}$. By compactness a finite number of these disks cover $C^{1/3}$. If two of these disks is not disjoint, then one is contained in the other, therefore a minimal such covering can only consist of a single set $V_E$. This is clearly impossible if $E$ is a Jordan arc connecting two points of $C^{1/2}$ in $A^{1/2}$. This contradiction proves the theorem.
\end{proof}

\begin{C}\label{usc}
If $\mathcal C$ is a circle decomposition of a surface with empty boundary, then $\mathcal C$ is upper semicontinuous.
\end{C}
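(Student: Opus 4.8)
The plan is to dispose of the orientable case directly from the two results just proved, and then reduce the general case to it via the orientable double cover. So suppose first that $S$ is orientable. Since $\partial S=\emptyset$, every member $C\in\mathcal C$ is a circle in the interior of $S$, and orientability forces $C$ to be two-sided (a one-sided circle would carry a M\"obius band neighborhood, contradicting orientability of $S$). Hence case (2) of Theorem~\ref{main} applies to each $C\in\mathcal C$ and furnishes a $\mathcal C$-annular neighborhood of $C$. This verifies condition (2) of Lemma~\ref{usc=ann_nbhds} for $S$, and the lemma immediately yields that $\mathcal C$ is upper semicontinuous. The orientable case thus costs essentially no extra work.

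If $S$ is not orientable, I would pass to the orientable double covering $\pi\colon\widetilde S\to S$, so that $\widetilde S$ is an orientable surface with $\partial\widetilde S=\emptyset$. The decomposition lifts: for each $C\in\mathcal C$ the preimage $\pi^{-1}(C)$ is a two-fold cover of the circle $C$, hence a disjoint union of one or two circles, so the components of the sets $\pi^{-1}(C)$, $C\in\mathcal C$, form a circle decomposition $\widetilde{\mathcal C}$ of $\widetilde S$ (exactly as already observed in the proof of Lemma~\ref{reduction}). By the orientable case, $\widetilde{\mathcal C}$ is upper semicontinuous.

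It then remains to transport upper semicontinuity downward along $\pi$, and this descent is the step I expect to require the most care. The features of $\pi$ that I would exploit are that it is a finite covering, hence an open, closed, surjective local homeomorphism. Given $C\in\mathcal C$ and an open neighborhood $U\supseteq C$, I would invoke upper semicontinuity of $\widetilde{\mathcal C}$ at each of the (one or two) lifts $\widetilde C_i$ of $C$, relative to the open set $\pi^{-1}(U)$, obtaining open neighborhoods $\widetilde V_i\supseteq\widetilde C_i$ such that every member of $\widetilde{\mathcal C}$ meeting $\widetilde V_i$ lies in $\pi^{-1}(U)$. Putting $W=\bigcup_i\widetilde V_i$, an open set containing $\pi^{-1}(C)$, closedness of $\pi$ makes $V:=S-\pi(\widetilde S-W)$ an open neighborhood of $C$ satisfying $\pi^{-1}(V)\subseteq W$. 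For any $D\in\mathcal C$ meeting $V$, a point of $D\cap V$ lifts into some $\widetilde V_i$; the member of $\widetilde{\mathcal C}$ through that lift projects onto $D$ and meets $\widetilde V_i$, hence is contained in $\pi^{-1}(U)$, and projecting back gives $D\subseteq U$. This establishes upper semicontinuity of $\mathcal C$ and completes the proof. The only genuinely delicate point is the bookkeeping in this last paragraph, namely arranging a single downstairs neighborhood $V$ whose full preimage is controlled by the separately chosen upstairs neighborhoods; everything else is a direct appeal to Theorem~\ref{main} and Lemma~\ref{usc=ann_nbhds}.
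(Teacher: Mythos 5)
Your proof is correct and follows the paper's argument exactly: the orientable case via Theorem~\ref{main} and Lemma~\ref{usc=ann_nbhds}, and the non-orientable case by lifting to the orientable double cover. The only difference is that you spell out the descent of upper semicontinuity along the covering, which the paper dismisses as obvious; your bookkeeping there is sound.
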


\begin{proof}
If the surface is orientable, then Lemma~\ref{usc=ann_nbhds} combined with Theorem~\ref{main} gives the result. In the non-orientable case the circle decomposition lifts to a circle decomposition of the orientable double covering. Upper semicontinuity of the latter obviously implies upper semicontinuity of the former.
\end{proof}

\begin{R}
It is also true that all circle decompositions of surfaces are upper semicontinuous, that is, in Corollary~\ref{usc} the surface $S$ may have boundary. If all connected components of $\partial S$ are circles, then this follows from Theorem~\ref{main}. One may prove directly that if $S$ has a circle decomposition, then none of the boundary components can be homeomorphic to the real line. We omit this proof since this fact will follow from Corollary~\ref{allowable}.
\end{R}

\section{Proof of the main theorem}
\label{sec_proof}

\begin{T}\label{torus_annulus}
Let $S$ be an orientable surface without boundary. If there exists a circle decomposition of $S$, then $S$ is either a torus or an open annulus.
\end{T}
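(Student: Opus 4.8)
The plan is to study the quotient map $q\colon S\to B:=S/\mathcal C$ that collapses each circle to a point. Since $S$ is orientable and $\partial S=\emptyset$, every circle in $S$ is two-sided and interior, so Theorem~\ref{main} equips each $C\in\mathcal C$ with a $\mathcal C$-annular neighborhood, while Corollary~\ref{usc} guarantees that $\mathcal C$ is upper semicontinuous, i.e.\ that $B$ is Hausdorff. The essential local picture is this: for a $\mathcal C$-annular neighborhood $A$ of $C$, the open annulus $\operatorname{int}A$ is saturated (by Lemma~\ref{annulus_nice} each interior circle of $A$ stays inside $A$), and by the remark after Lemma~\ref{annulus_usc} its image $q(\operatorname{int}A)$ is an open interval whose full $q$-preimage is exactly $\operatorname{int}A$. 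Hence every point of $B$ has a neighborhood homeomorphic to $\mathbb R$, so $B$ is a connected $1$-manifold without boundary, and therefore $B\cong\mathbb R$ or $B\cong S^1$.

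Next I would prove that $q$ is proper by showing that $q^{-1}([a,b])$ is a $\mathcal C$-annulus with boundary circles $q^{-1}(a)$ and $q^{-1}(b)$ for every compact interval $[a,b]\subseteq B$. This is a local-to-global claim: over a short interval the preimage is a subannulus cut out of an annular neighborhood, and covering $[a,b]$ by finitely many such intervals and gluing the resulting annuli in order along common boundary circles yields an annulus over all of $[a,b]$; a connectedness argument along $[a,b]$ makes this precise. Consequently $q^{-1}$ of any arc is an annulus, and, since $S^1$ is compact, properness shows that $S$ is compact in the case $B\cong S^1$.

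If $B\cong S^1$, then $S$ is a compact, connected, orientable surface without boundary. Covering $S^1$ by two open arcs whose intersection has two components, the four preimages are open annuli, so a Mayer--Vietoris computation gives $\chi(S)=0+0-0=0$; as the only closed orientable surface with vanishing Euler characteristic is the torus, orientability excludes the Klein bottle and $S$ is a torus. If instead $B\cong\mathbb R$, set $C_n=q^{-1}(n)$ for $n\in\mathbb Z$; each $q^{-1}([n,n+1])$ is an annulus, so there is a homeomorphism $\phi_n\colon q^{-1}([n,n+1])\to S^1\times[n,n+1]$ sending $C_n,C_{n+1}$ to the two boundary circles. I would glue the $\phi_n$ into a homeomorphism $S\cong S^1\times\mathbb R$ inductively: the two identifications of $C_{n+1}$ induced by $\phi_n$ and $\phi_{n+1}$ differ by a self-homeomorphism $g$ of $S^1$, and post-composing $\phi_{n+1}$ with $g\times\operatorname{id}$ (a homeomorphism of $S^1\times[n+1,n+2]$ restricting to $g$ on the near end) makes them agree; proceeding in both directions identifies $S$ with the open annulus.

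The main obstacle is that $q$ is genuinely \emph{not} a fiber bundle: the Example in the introduction shows that a circle decomposition of an annulus need not be a topological product, so $q$ admits no local trivializations respecting the projection, and the fiber-bundle classification over $\mathbb R$ and $S^1$ is simply unavailable. The point that rescues the argument is that only the \emph{topological} type of $q^{-1}([a,b])$ is needed, and this is an annulus (abstractly $S^1\times[0,1]$) however wild the decomposition inside it may be. The two places demanding care are the properness statement of the second paragraph and the verification that the inductive pasting of the third is a homeomorphism---here the closed cover $\{q^{-1}([n,n+1])\}$ is locally finite, so the pasting lemma applies---both of which become routine once the $1$-manifold structure of $B$ and the annulus structure of $q$ over intervals are in hand.
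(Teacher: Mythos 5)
Your argument is correct, but it takes a genuinely different route from the paper. The paper never forms the quotient space: it declares two circles equivalent when they bound a $\mathcal C$-annulus, uses Theorem~\ref{main} to see that each equivalence class has open union (hence there is one class), observes that two overlapping $\mathcal C$-annuli unite into a $\mathcal C$-annulus or into a torus equal to $S$, and concludes by noting that every connected compact subset lies in a single $\mathcal C$-annulus, so $S$ is either a torus or an increasing union of $\mathcal C$-annuli, i.e.\ an open annulus. You instead build the leaf space $B=S/\mathcal C$, show it is a $1$-manifold, and reconstruct $S$ over $B\cong S^1$ or $B\cong\mathbb R$. Both proofs pivot on exactly the same inputs --- Theorem~\ref{main} supplying $\mathcal C$-annular neighborhoods and Lemma~\ref{annulus_nice} controlling decompositions of annuli --- and your ``preimage of an arc is an annulus'' plays the same role as the paper's ``union of overlapping $\mathcal C$-annuli is a $\mathcal C$-annulus.'' What your version buys is extra structure: properness of $q$ and an explicit identification of the leaf space, making the Seifert-fibration-like picture visible. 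What it costs is heavier machinery: the classification of $1$-manifolds (where you should note that $B$ is Lindel\"of, being a continuous image of the second countable $S$, to exclude the long line), Mayer--Vietoris and the classification of closed orientable surfaces in the $S^1$ case (the paper gets the torus directly by gluing two annuli along both boundary circles), and the inductive pasting over $\mathbb Z$-many annuli in the $\mathbb R$ case, which the paper replaces by a one-line exhaustion. Two small points of hygiene: the saturation of $\operatorname{int}A$ follows not from Lemma~\ref{annulus_nice} but from the clopen argument (a circle of $\mathcal C$ meeting $\operatorname{int}A$ is disjoint from $\partial A$, so its intersection with $\operatorname{int}A$ is clopen in it), and to transfer the remark after Lemma~\ref{annulus_usc} to $q|_A$ you should observe that $q|_A$ is a closed map onto its image since $A$ is compact and $B$ is Hausdorff.
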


\begin{proof}
Let $\mathcal C$ be a circle decomposition of $S$. Call two members of $\mathcal C$ equivalent if they are equal or bound a $\mathcal C$-annulus in $S$. This is clearly an equivalence relation, and Theorem~\ref{main} implies that the union of each equivalence class is open in $S$. Since $S$ is connected, there is a single class.

Observe that if two $\mathcal C$-annuli in $S$ are not disjoint, then their union is either again a $\mathcal C$-annulus, or else is a torus which equals $S$.

If $K\subseteq S$ is any connected compact set, then repeated application of this last observation shows that either $S$ is a torus, or $K$ is covered by a single $\mathcal C$-annulus.

So, if $S$ is compact, then it is a torus. If $S$ is not compact, then one can exhaust $S$ by an increasing sequence of connected compact subsets, therefore, $S$ can be exhausted by a strictly increasing sequence of $\mathcal C$-annuli. The union of such a sequence is an open annulus, so in the noncompact case $S$ is an open annulus.
\end{proof}

\begin{C}\label{allowable}
  If a surface $S$ admits a circle decomposition, then $S$ is allowable.
\end{C}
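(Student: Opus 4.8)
The plan is to derive this corollary as an essentially immediate consequence of the two main structural results already established, namely the reduction Lemma~\ref{reduction} and the orientable classification Theorem~\ref{torus_annulus}. The strategy is a proof by contradiction: assume $S$ admits a circle decomposition but is \emph{not} allowable, and show this leads to an absurdity.

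First I would invoke Lemma~\ref{reduction}. Since $S$ is assumed non-allowable and carries a circle decomposition, the lemma produces an orientable surface $\widetilde{S}$ with $\partial\widetilde{S}=\emptyset$ that is again non-allowable and again admits a circle decomposition. This step is where all the genuine bookkeeping has already been done — the lemma has handled the elimination of boundary (by gluing on half-open annuli and punctured half-planes) and the passage to the orientable double cover, together with the verification that non-allowability is preserved throughout. Having $\widetilde{S}$ in hand, the remaining work is purely to contradict its non-allowability.

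Next I would apply Theorem~\ref{torus_annulus} to $\widetilde{S}$, which is orientable and has empty boundary. The theorem forces $\widetilde{S}$ to be either a torus or an open annulus. But both a torus and an open annulus appear explicitly on the list of allowable surfaces from the introduction. Thus $\widetilde{S}$ is allowable, directly contradicting the conclusion of Lemma~\ref{reduction} that $\widetilde{S}$ is not allowable. This contradiction shows that no non-allowable surface can admit a circle decomposition, which is exactly the statement of Corollary~\ref{allowable}.

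There is no real obstacle at this stage: the substantive difficulties — the local separation analysis yielding $\mathcal C$-annular neighborhoods in Theorem~\ref{main}, and the gluing-and-covering arguments of Lemma~\ref{reduction} — have all been resolved earlier, so the proof of the corollary is simply the assembly of these pieces. The only point to state carefully is the observation that torus and open annulus both lie among the seven allowable types, which makes the contradiction explicit.
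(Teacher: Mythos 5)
Your proposal is correct and follows exactly the same route as the paper's own proof: contradiction via Lemma~\ref{reduction} to obtain a non-allowable orientable boundaryless $\widetilde{S}$ with a circle decomposition, then Theorem~\ref{torus_annulus} forcing $\widetilde{S}$ to be a torus or open annulus, both allowable. Nothing is missing.
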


\begin{proof}
If $S$ were not allowable, then Lemma~\ref{reduction} would produce $\widetilde{S}$, orientable without boundary, still not allowable, and still admitting a circle decomposition. But Theorem~\ref{torus_annulus} implies that such an $\widetilde{S}$ must be a torus or an open annulus, both of which are allowable, a contradiction.
\end{proof}

\end{document}